\newtheorem{theorem}{Theorem}[section]
\newtheorem{corollary}[theorem] {Corollary}
\newtheorem{definition}[theorem]{Definition}
\newtheorem{example}[theorem]{Example}
\newtheorem{question}[theorem]{Question}
\title{This is the title}
\begin{document}
	\vspace{0.9cm}	
\hrule\hrule\hrule\hrule\hrule
\vspace{0.3cm}	
\begin{center}
{\bf{FUNCTIONAL GHOBBER-JAMING UNCERTAINTY PRINCIPLE}}\\
\vspace{0.3cm}
\hrule\hrule\hrule\hrule\hrule
\vspace{0.3cm}
\textbf{K. MAHESH KRISHNA}\\
Post Doctoral Fellow \\
Statistics and Mathematics Unit\\
Indian Statistical Institute, Bangalore Centre\\
Karnataka 560 059, India\\
Email: kmaheshak@gmail.com\\

Date: \today
\end{center}

\hrule\hrule
\vspace{0.5cm}
\textbf{Abstract}: Let $(\{f_j\}_{j=1}^n, \{\tau_j\}_{j=1}^n)$ and $(\{g_k\}_{k=1}^n, \{\omega_k\}_{k=1}^n)$ be  two p-orthonormal bases  for a finite dimensional Banach space $\mathcal{X}$. Let  $M,N\subseteq \{1, \dots, n\}$ be such that 
\begin{align*}
	o(M)^\frac{1}{q}o(N)^\frac{1}{p}< \frac{1}{\displaystyle \max_{1\leq j,k\leq n}|g_k(\tau_j) |},
\end{align*}
where $q$ is the conjugate index of $p$. Then for all $x \in \mathcal{X}$,  we show that 
\begin{align}\label{FGJU}
	\|x\|\leq  \left(1+\frac{1}{1-o(M)^\frac{1}{q}o(N)^\frac{1}{p}\displaystyle\max_{1\leq j,k\leq n}|g_k(\tau_j)|}\right)\left[\left(\sum_{j\in M^c}|f_j(x)|^p\right)^\frac{1}{p}+\left(\sum_{k\in N^c}|g_k(x) |^p\right)^\frac{1}{p}\right].
\end{align}
 We call Inequality (\ref{FGJU}) as \textbf{Functional Ghobber-Jaming Uncertainty Principle}. Inequality (\ref{FGJU}) improves the uncertainty principle obtained by Ghobber and Jaming \textit{[Linear Algebra Appl., 2011]}.

\textbf{Keywords}:   Uncertainty Principle, Orthonormal Basis,  Hilbert space, Banach space.

\textbf{Mathematics Subject Classification (2020)}: 42C15, 46B03, 46B04.\\

\hrule

\tableofcontents
\hrule
\section{Introduction}
Let $d \in \mathbb{N}$ and  $~\widehat{}:\mathcal{L}^2 (\mathbb{R}^d) \to \mathcal{L}^2 (\mathbb{R}^d)$ be the unitary Fourier transform obtained by extending uniquely the bounded linear operator 
\begin{align*}
\widehat{}:\mathcal{L}^1 (\mathbb{R}^d)\cap  \mathcal{L}^2	 (\mathbb{R}^d) \ni f \mapsto \widehat{f} \in  C_0(\mathbb{R}^d); \quad \widehat{f}: \mathbb{R}^d \ni \xi \mapsto \widehat{f}(\xi)\coloneqq \int_{\mathbb{R}^d}	f(x)e^{-2\pi i  \langle x, \xi \rangle}\,dx\ \in \mathbb{C}.
\end{align*}
In 2007, Jaming \cite{JAMING} extended the uncertainty principle obtained by Nazarov for $\mathbb{R}$ in 1993 \cite{NAZAROV} (cf. \cite{HARVINJORICKE}). In the following theorem, Lebesgue measure on $\mathbb{R}^d$ is denoted by $m$. Mean width of a measurable subset  $E$ of $\mathbb{R}^d$ having finite measure  is denoted by $w(E)$.
\begin{theorem}\cite{JAMING, NAZAROV}\label{JN} (\textbf{Nazarov-Jaming Uncertainty Principle}) For each $d \in \mathbb{N}$, there exists a universal constant $C_d$ (depends upon $d$) satisfying the following: If $E, F \subseteq \mathbb{R}^d$ are measurable subsets having finite measure, then for all $f \in \mathcal{L}^2 (\mathbb{R}^d)$, 
	\begin{align}\label{NAZAROVJAMINGIN}
\int_{\mathbb{R}^d}	|f(x)|^2\,dx &\leq C_d e^{C_d \min \{m(E)m(F), m(E)^\frac{1}{d}w(F), m(F)^\frac{1}{d}w(E)\}} 
&\left[\int_{E^c}	|f(x)|^2\,dx+\int_{F^c}	|\widehat{f}(\xi)|^2\,d\xi\right].
	\end{align} 
In particular, if $f$ is supported on $E$ and 	$\widehat{f}$ is supported on $F$, then $f=0$. 
\end{theorem}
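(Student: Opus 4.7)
The plan is to deduce the theorem from a quantitative operator-norm estimate on the composition of two standard localisation projections. Define $P_E : \mathcal{L}^2(\mathbb{R}^d) \to \mathcal{L}^2(\mathbb{R}^d)$ by $P_E f = \chi_E f$ and $Q_F : \mathcal{L}^2(\mathbb{R}^d) \to \mathcal{L}^2(\mathbb{R}^d)$ by $Q_F f = (\chi_F \widehat{f})^{\vee}$; these are orthogonal projections, and by Plancherel
\begin{align*}
\|(I-P_E)f\|_2^2 = \int_{E^c} |f(x)|^2\,dx, \qquad \|(I-Q_F)f\|_2^2 = \int_{F^c} |\widehat{f}(\xi)|^2\,d\xi.
\end{align*}

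Using the decomposition $f = (I-P_E)f + P_E(I-Q_F)f + P_E Q_F f$, the triangle inequality and $\|P_E\|\leq 1$ yield
\begin{align*}
(1 - \|P_E Q_F\|)\,\|f\|_2 \leq \|(I-P_E)f\|_2 + \|(I-Q_F)f\|_2.
\end{align*}
Squaring and applying $(a+b)^2 \leq 2(a^2+b^2)$ reduces Inequality (\ref{NAZAROVJAMINGIN}) to the quantitative operator bound
\begin{align*}
\|P_E Q_F\| \leq 1 - e^{-C_d\, \alpha(E,F)/2}, \qquad \alpha(E,F) := \min\bigl\{m(E)m(F),\ m(E)^{1/d} w(F),\ m(F)^{1/d} w(E)\bigr\}.
\end{align*}
This automatically delivers the support statement, since if $f$ is supported on $E$ and $\widehat{f}$ on $F$ then both right-hand terms vanish, forcing $f=0$.

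For the easy half, the integral kernel of $P_E Q_F$ is $\chi_E(x)(\chi_F)^{\vee}(x-y)$, whence the Hilbert-Schmidt calculation yields $\|P_E Q_F\|_{\mathrm{HS}}^2 = m(E)m(F)$. This settles the regime where $m(E)m(F)$ is small. The heart of the proof, and the main obstacle, is the exponential estimate when $m(E)m(F) \geq 1$, together with the two mean-width estimates where the Hilbert-Schmidt bound is entirely useless.

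For that step I would follow Nazarov in dimension one and Jaming in general dimension: exploit that $g := Q_F f$ is the Fourier transform of a compactly supported function, hence extends to an entire function of exponential type controlled by the convex hull of $F$. Bernstein-type inequalities then provide directional pointwise control of $g$ (yielding the $w(F)$ factor), while a Remez/Tur\'an-type inequality transfers a global $\mathcal{L}^2$ bound into a bound on $E$ at the cost of $e^{C_d \alpha(E,F)}$. Symmetry in $(E,F)$ via Plancherel gives the third term. The delicate part is the Remez-type step in higher dimensions, where one must replace Nazarov's logarithmic-integral argument by a geometric decomposition of $E$ adapted to $F$; I would cite \cite{NAZAROV, JAMING} as the black box for this estimate rather than reconstruct it.
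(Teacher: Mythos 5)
The paper does not prove Theorem \ref{JN} at all: it is imported verbatim from \cite{NAZAROV, JAMING} as motivation for the finite-dimensional results, so there is no in-house argument to compare against. Your functional-analytic skeleton is nevertheless the standard and correct one, and it is essentially the same reduction the author later uses to prove Theorem \ref{FGJ}: the identity $f=(I-P_E)f+P_E(I-Q_F)f+P_EQ_Ff$ together with $\|P_E\|\leq 1$ and Plancherel does give $(1-\|P_EQ_F\|)\|f\|_2\leq \|(I-P_E)f\|_2+\|(I-Q_F)f\|_2$, and your Hilbert--Schmidt computation $\|P_EQ_F\|_{\mathrm{HS}}^2=m(E)m(F)$ is correct.

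As a proof of the theorem, however, the proposal has a genuine gap: the entire analytic content of Nazarov's and Jaming's work --- the quantitative lower bound on $1-\|P_EQ_F\|$ in terms of $e^{-C_d\alpha(E,F)}$, i.e.\ the Tur\'an/Remez-type estimate for Fourier transforms of functions with spectrum in $F$ and its higher-dimensional geometric refinement involving the mean width --- is precisely the step you declare a black box and delegate to \cite{NAZAROV, JAMING}. What remains is only the routine reduction. There is also a small but real error in the form of your intermediate claim: $\|P_EQ_F\|\leq 1-e^{-C_d\alpha/2}$ cannot hold with a fixed $C_d$ as $\alpha\to 0$, since the right-hand side is then of order $C_d\alpha/2$ while $\|P_EQ_F\|$ is generically of order $\sqrt{m(E)m(F)}=\sqrt{\alpha}$ (the Hilbert--Schmidt bound is essentially sharp for the top singular value in that regime, as the prolate spheroidal eigenvalue asymptotics show). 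The correct target is the weaker statement $1-\|P_EQ_F\|\geq c_d e^{-C_d\alpha}$ with an additional multiplicative constant; for $\alpha$ bounded away from $1$ this does follow from your Hilbert--Schmidt bound, while for large $\alpha$ it is exactly the deep estimate you have not supplied.
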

Theorem \ref{JN} and the milestone paper \cite{DONOHOSTARK} of Donoho and Stark which derived finite dimensional uncertainty principles, motivated Ghobber and Jaming \cite{GHOBBERJAMING} to ask what is the exact finite dimensional analogue of Theorem   \ref{JN}? Ghobber and Jaming were able to derive the following beautiful theorem. Given a subset  $M\subseteq \{1, \dots, n\}$, the number of elements in $M$ is denoted by $o(M)$. 
\begin{theorem}\cite{GHOBBERJAMING}\label{GJ} (\textbf{Ghobber-Jaming  Uncertainty Principle})
Let 	$\{\tau_j\}_{j=1}^n$ and $\{\omega_j\}_{j=1}^n$ be orthonormal bases for   the Hilbert space $\mathbb{C}^n$.  If $M,N\subseteq \{1, \dots, n\}$ are such that 
\begin{align}
o(M)o(N)< \frac{1}{\displaystyle \max_{1\leq j, k\leq n}|\langle\tau_j, \omega_k\rangle |^2},
\end{align}
then for all $h \in \mathbb{C}^n$, 
\begin{align*}
	\|h\|\leq \left(1+\frac{1}{1-\sqrt{o(M)o(N)}\displaystyle\max_{1\leq j,k\leq n}|\langle\tau_j, \omega_k\rangle |}\right)\left[\left(\sum_{j\in M^c}|\langle h, \tau_j\rangle |^2\right)^\frac{1}{2}+\left(\sum_{k\in N^c}|\langle h, \omega_k\rangle |^2\right)^\frac{1}{2}\right].
\end{align*}
In particular, if $h$ is supported on $M$ in the expansion using basis $\{\tau_j\}_{j=1}^n$ and $h$ is supported on $N$ in the expansion using basis $\{\omega_j\}_{j=1}^n$, then $h=0$. 
\end{theorem}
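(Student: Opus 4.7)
The plan is to work with the two orthogonal projections $P_M x := \sum_{j \in M} \langle x, \tau_j\rangle \tau_j$ and $Q_N x := \sum_{k \in N} \langle x, \omega_k\rangle \omega_k$, and to derive the stated inequality from a single operator-norm estimate $\|Q_N P_M\| \leq \alpha$, where $\alpha := \sqrt{o(M)\, o(N)}\,\max_{j,k}|\langle \tau_j, \omega_k\rangle|$. The hypothesis on $o(M) o(N)$ is exactly $\alpha < 1$, which is what will make a geometric-series type inversion possible.

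First I would establish the key cross-basis estimate. If $x = \sum_{j\in M} a_j \tau_j$ lies in the range of $P_M$ and $y = \sum_{k\in N} b_k \omega_k$ lies in the range of $Q_N$, then
\begin{align*}
|\langle x, y\rangle| \;\leq\; \max_{j,k}|\langle \tau_j, \omega_k\rangle|\sum_{j\in M,\,k\in N}|a_j||b_k| \;\leq\; \alpha\,\|x\|\|y\|,
\end{align*}
where the last step uses Cauchy--Schwarz in the form $\sum_{j\in M}|a_j| \leq \sqrt{o(M)}\,\|x\|$ together with its analogue for the $b_k$. Applying this with $x = P_M h$ and $y = Q_N P_M h$, and using $\|Q_N P_M h\|^2 = \langle P_M h, Q_N P_M h\rangle$ (since $Q_N$ is a self-adjoint projection), yields $\|Q_N P_M h\| \leq \alpha \|P_M h\|$.

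With this in hand, I would split $h = P_M h + (I - P_M) h$ and separately decompose $P_M h = Q_N P_M h + (I - Q_N) P_M h$. Writing $(I - Q_N) P_M h = (I - Q_N) h - (I - Q_N)(I - P_M) h$, two applications of the triangle inequality (together with $\|I - Q_N\| \leq 1$) give $\|P_M h\| \leq \alpha \|P_M h\| + \|(I - Q_N) h\| + \|(I - P_M) h\|$, hence
\begin{align*}
\|P_M h\| \leq \frac{1}{1 - \alpha}\bigl[\,\|(I - Q_N) h\| + \|(I - P_M) h\|\,\bigr].
\end{align*}
Substituting into $\|h\| \leq \|P_M h\| + \|(I - P_M) h\|$, majorising the resulting asymmetric coefficients by the common value $1 + \frac{1}{1-\alpha}$, and then identifying $\|(I - P_M) h\|$ with $\bigl(\sum_{j \in M^c}|\langle h, \tau_j\rangle|^2\bigr)^{1/2}$ (and analogously for $Q_N$), delivers the stated inequality. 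The ``in particular'' clause is then immediate, since the right-hand side vanishes when $h$ is supported on $M$ in the $\tau$-expansion and on $N$ in the $\omega$-expansion.

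I do not anticipate a serious obstacle: the entire argument rests on the one Cauchy--Schwarz bookkeeping step yielding $\|Q_N P_M\| \leq \alpha$, which is a standard coherence-style estimate, and the remainder is routine triangle-inequality juggling made possible by $\alpha < 1$.
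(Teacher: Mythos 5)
Your proof is correct. Note, however, that the paper does not prove Theorem \ref{GJ} directly: it derives it as the $p=q=2$ specialization of the Banach-space Theorem \ref{FGJ}, and your argument is essentially that general proof transplanted back into the Hilbert setting. The skeleton is the same: a coherence-type operator bound $\|Q_NP_M\|\le\alpha$ (the paper's $\|P_NVP_M\|\le o(M)^{1/q}o(N)^{1/p}\max_{j,k}|g_k(\tau_j)|$, where $V$ is the isometry carrying one basis to the other, so that $P_NVP_M$ is your $Q_NP_M$ up to composition with an isometry), followed by the identical decomposition $P_Mh=Q_NP_Mh+(I-Q_N)h-(I-Q_N)(I-P_M)h$ and the same majorization of the two resulting coefficients by the common value $1+\frac{1}{1-\alpha}$. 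The one genuinely different ingredient is how the coherence bound is obtained: you exploit self-adjointness of $Q_N$ to reduce $\|Q_NP_Mh\|^2$ to the bilinear form $\langle P_Mh,Q_NP_Mh\rangle$ and then apply Cauchy--Schwarz to the double sum, whereas the paper expands $\|P_NVP_Mx\|^p$ coordinatewise and applies H\"older's inequality; the latter is what allows the argument to survive in $\ell^p$, where no inner product (and no adjoint) is available. Your route is a bit cleaner in the Hilbert case, but it is intrinsically $p=2$ and would not yield the paper's main theorem.
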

 It is  reasonable to ask whether there is a Banach space version of Ghobber-Jaming  Uncertainty Principle,  which when restricted to Hilbert space,  reduces to Theorem \ref{GJ}? We are going to answer this question in the paper.

\section{Functional Ghobber-Jaming Uncertainty Principle}
In the paper,   $\mathbb{K}$ denotes $\mathbb{C}$ or $\mathbb{R}$ and $\mathcal{X}$ denotes a  finite dimensional Banach space over $\mathbb{K}$. Identity operator on $\mathcal{X}$ is denoted by $I_\mathcal{X}$. Dual of $\mathcal{X}$ is denoted by $\mathcal{X}^*$. Whenever $1<p<\infty$, $q$ denotes conjugate index of $p$. For $d \in \mathbb{N}$, the standard finite dimensional Banach space $\mathbb{K}^d$ over $\mathbb{K}$ equipped with standard $\|\cdot\|_p$ norm is denoted by $\ell^p([d])$. Canonical basis for $\mathbb{K}^d$ is denoted by $\{\delta_j\}_{j=1}^d$ and $\{\zeta_j\}_{j=1}^d$ be the coordinate functionals associated with $\{\delta_j\}_{j=1}^d$. Motivated from the properties of orthonormal bases for Hilbert spaces, we set the following notion of  p-orthonormal bases which is also motivated from the notion of p-approximate Schauder frames \cite{KRISHNAJOHNSON} and p-unconditional Schauder frames \cite{KRISHNA2}.
\begin{definition}\label{PONB}
	Let $\mathcal{X}$  be a  finite dimensional Banach space over $\mathbb{K}$.   Let $\{\tau_j\}_{j=1}^n$ be a basis for   $\mathcal{X}$ and 	let $\{f_j\}_{j=1}^n$ be the coordinate functionals associated with $\{\tau_j\}_{j=1}^n$. The pair $(\{f_j\}_{j=1}^n, \{\tau_j\}_{j=1}^n)$ is said to be a \textbf{p-orthonormal basis} ($1<p <\infty$) for $\mathcal{X}$ if  the following conditions hold.
	\begin{enumerate}[\upshape(i)]
		\item $\|f_j\|=\|\tau_j\|=1$ for all $1\leq j\leq n$.
		\item For every $(a_j)_{j=1}^n \in \mathbb{K}^n$, 
		\begin{align*}
		\left\|\sum_{j=1}^na_j\tau_j \right\|=\left(\sum_{j=1}^n|a_j|^p\right)^\frac{1}{p}.
		\end{align*}
		\end{enumerate}
\end{definition}
Given a p-orthonormal basis $(\{f_j\}_{j=1}^n, \{\tau_j\}_{j=1}^n)$, we easily see from  Definition \ref{PONB} that 
\begin{align*}
\|x\|=	\left\|\sum_{j=1}^nf_j(x)\tau_j \right\|=\left(\sum_{j=1}^n|f_j(x)|^p\right)^\frac{1}{p}, \quad \forall x \in \mathcal{X}.	
\end{align*}
\begin{example}\label{E}
The pair 	$(\{\zeta_j\}_{j=1}^d, \{\delta_j\}_{j=1}^d)$ is a p-orthonormal basis for $\ell^p([d])$.
\end{example}
Like orthonormal bases for Hilbert spaces, the following theorem characterizes all p-orthonormal bases.
\begin{theorem}\label{ONB}
Let $(\{f_j\}_{j=1}^n, \{\tau_j\}_{j=1}^n)$ be a p-orthonormal basis for $\mathcal{X}$. Then a pair 	$(\{g_j\}_{j=1}^n, \{\omega_j\}_{j=1}^n)$ is a p-orthonormal basis for $\mathcal{X}$ if and only if there is an invertible linear isometry $V:\mathcal{X} \to \mathcal{X}$ such that 
\begin{align*}
	g_j=f_jV^{-1}, ~ \omega_j=V\tau_j, \quad \forall 1\leq j \leq n.
\end{align*}
\end{theorem}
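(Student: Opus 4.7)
The plan is to handle the two directions separately, with the forward direction being the substantive one.

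For the forward direction, I would start by \emph{defining} the candidate isometry. Given that both $(\{f_j\}_{j=1}^n, \{\tau_j\}_{j=1}^n)$ and $(\{g_j\}_{j=1}^n, \{\omega_j\}_{j=1}^n)$ are p-orthonormal bases, I would set $V:\mathcal{X}\to\mathcal{X}$ to be the unique linear map sending $\tau_j\mapsto\omega_j$ for each $j$; this is well-defined since $\{\tau_j\}_{j=1}^n$ is a basis. For any $x=\sum_{j=1}^n f_j(x)\tau_j$, applying the norm formula from Definition \ref{PONB} on both ends gives
\begin{align*}
\|Vx\|=\left\|\sum_{j=1}^n f_j(x)\omega_j\right\|=\left(\sum_{j=1}^n|f_j(x)|^p\right)^{\frac{1}{p}}=\|x\|,
\end{align*}
so $V$ is an isometry, and injectivity plus finite-dimensionality yield invertibility. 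The only remaining item is the identity $g_j=f_jV^{-1}$. I would verify this by evaluating both sides on the basis $\{\omega_k\}_{k=1}^n$: since $g_j$ is the coordinate functional associated with $\{\omega_k\}$ we get $g_j(\omega_k)=\delta_{jk}$, while $f_jV^{-1}(\omega_k)=f_j(\tau_k)=\delta_{jk}$, so the two functionals agree on a basis and are therefore equal.

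For the backward direction, assume $V:\mathcal{X}\to\mathcal{X}$ is an invertible linear isometry with $g_j=f_jV^{-1}$ and $\omega_j=V\tau_j$. Then $\{\omega_j\}_{j=1}^n$ is a basis because $V$ is a bijection; $\|\omega_j\|=\|V\tau_j\|=\|\tau_j\|=1$ since $V$ is an isometry; and $\|g_j\|=\|f_jV^{-1}\|=\|f_j\|=1$ because $V^{-1}$ is also a surjective isometry. For the norm identity, for any $(a_j)_{j=1}^n\in\mathbb{K}^n$,
\begin{align*}
\left\|\sum_{j=1}^n a_j\omega_j\right\|=\left\|V\!\left(\sum_{j=1}^n a_j\tau_j\right)\right\|=\left\|\sum_{j=1}^n a_j\tau_j\right\|=\left(\sum_{j=1}^n|a_j|^p\right)^{\frac{1}{p}}.
\end{align*}
Finally I would confirm that the $g_j$ are genuinely the coordinate functionals for $\{\omega_j\}_{j=1}^n$: for $x=\sum_k a_k\omega_k$ one has $V^{-1}x=\sum_k a_k\tau_k$, hence $g_j(x)=f_j(V^{-1}x)=a_j$.

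I do not anticipate a real obstacle here; the statement is essentially the transport of the canonical coordinate structure across the bijection $\tau_j\leftrightarrow\omega_j$. The mildly delicate point is being careful that the \emph{coordinate functionals} $g_j$ are forced to equal $f_jV^{-1}$ (rather than just being \emph{some} system biorthogonal to the transported basis), which is why the evaluation on the basis $\{\omega_k\}$ is the key check. Everything else is a routine unwinding of the two defining properties of a p-orthonormal basis.
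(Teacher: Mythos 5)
Your proposal is correct and follows essentially the same route as the paper: the same map $V$ (sending $\tau_j\mapsto\omega_j$, i.e.\ $x\mapsto\sum_{j=1}^n f_j(x)\omega_j$), the same isometry computation via the defining norm identity, and the same verifications in the converse direction. The only cosmetic differences are that the paper exhibits the explicit inverse $V^{-1}x=\sum_{j=1}^n g_j(x)\tau_j$ and checks $f_j(V^{-1}x)=g_j(x)$ by direct expansion, whereas you obtain invertibility from injectivity plus finite-dimensionality and verify $g_j=f_jV^{-1}$ by agreement on the basis $\{\omega_k\}_{k=1}^n$.
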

\begin{proof}
	$(\Rightarrow)$ Define $V:\mathcal{X}\ni x \mapsto \sum_{j=1}^{n}f_j(x)\omega_j \in \mathcal{X}$. Since $\{\omega_j\}_{j=1}^n$ is a basis for $\mathcal{X}$, $V$ is invertible with inverse $V^{-1}:\mathcal{X}\ni x \mapsto \sum_{j=1}^{n}g_j(x)\tau_j \in \mathcal{X}$. For $x \in \mathcal{X}$, 
	\begin{align*}
		\|Vx\|=\left\| \sum_{j=1}^{n}f_j(x)\omega_j \right\|=\left(\sum_{j=1}^n|f_j(x)|^p\right)^\frac{1}{p}=\left\| \sum_{j=1}^{n}f_j(x)\tau_j \right\|=\|x\|.
	\end{align*}
	Therefore $V$ is isometry. Note that we clearly have $\omega_j=V\tau_j,  \forall 1\leq j \leq n.$ Now let $1\leq j \leq n$. Then 
	\begin{align*}
		f_j(V^{-1}x)=f_j \left(\sum_{k=1}^{n}g_k(x)\tau_k \right)=\sum_{k=1}^{n}g_k(x)f_j(\tau_k)=g_j(x), \quad \forall x \in \mathcal{X}.
	\end{align*}
$(\Leftarrow)$ Since $V$ is invertible, $\{\omega_j\}_{j=1}^n$ is a basis for $\mathcal{X}$. Now we see that $g_j(\omega_k)=f_j(V^{-1}V\tau_k)=f_j(\tau_k)=\delta_{j,k}$ for all $1\leq j, k \leq n$. Therefore $\{g_j\}_{j=1}^n$ is the coordinate functionals associated with $\{\omega_j\}_{j=1}^n$.  Since $V$ is an isometry, we have $\|\omega_j\|=1$ for all $1\leq j \leq n$. Since $V$ is also  invertible, we have 
\begin{align*}
	\|g_j\|&=\displaystyle\sup_{x\in \mathcal{X}, \|x\|\leq 1}|g_j(x)|=\displaystyle\sup_{x\in \mathcal{X}, \|x\|\leq 1}|f_j(V^{-1}x)|=\displaystyle\sup_{Vy\in \mathcal{X}, \|Vy\|\leq 1}|f_j(y)|\\
	&=\displaystyle\sup_{Vy\in \mathcal{X}, \|y\|\leq 1}|f_j(y)|=\|f_j\|=1, \quad \forall 1\leq j \leq n.
\end{align*}
Finally, for  every $(a_j)_{j=1}^n \in \mathbb{K}^n$, 
\begin{align*}
	\left\|\sum_{j=1}^na_j\omega_j \right\|=	\left\|\sum_{j=1}^na_jV\tau_j \right\|=	\left\|V\left(\sum_{j=1}^na_j\tau_j \right)\right\|=	\left\|\sum_{j=1}^na_j\tau_j \right\|=\left(\sum_{j=1}^n|a_j|^p\right)^\frac{1}{p}.
\end{align*}
\end{proof}
In the next result we  show that Example \ref{E} is prototypical as long as we consider  p-orthonormal bases.
\begin{theorem}
If $\mathcal{X}$	has a p-orthonormal basis $(\{f_j\}_{j=1}^n, \{\tau_j\}_{j=1}^n)$, then $\mathcal{X}$ is isometrically isomorphic to $\ell^p([n])$.
\end{theorem}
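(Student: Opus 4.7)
The plan is to produce the isometric isomorphism explicitly using the coordinate-expansion formula that any p-orthonormal basis furnishes.

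First I would fix the p-orthonormal basis $(\{\zeta_j\}_{j=1}^n, \{\delta_j\}_{j=1}^n)$ of $\ell^p([n])$ from Example \ref{E} and define the candidate map $T:\mathcal{X}\to \ell^p([n])$ by
\begin{align*}
T(x) \coloneqq \sum_{j=1}^{n} f_j(x)\,\delta_j, \qquad x\in\mathcal{X}.
\end{align*}
Linearity of $T$ is immediate from linearity of each $f_j$.

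Next I would verify the isometry property. Applying the identity
\begin{align*}
\|x\|=\left(\sum_{j=1}^n|f_j(x)|^p\right)^{\frac{1}{p}}, \quad \forall x\in\mathcal{X},
\end{align*}
that follows directly from Definition \ref{PONB} (and is recorded immediately after it in the paper), together with the analogous identity in $\ell^p([n])$ for the p-orthonormal basis $(\{\zeta_j\}_{j=1}^n, \{\delta_j\}_{j=1}^n)$, I obtain
\begin{align*}
\|T(x)\|_p = \left\|\sum_{j=1}^{n} f_j(x)\,\delta_j\right\|_p = \left(\sum_{j=1}^n|f_j(x)|^p\right)^{\frac{1}{p}} = \|x\|.
\end{align*}
Hence $T$ is an isometric linear embedding.

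Finally I would note that surjectivity is free: since $\{\delta_j\}_{j=1}^n$ spans $\ell^p([n])$ and $T(\tau_j)=\delta_j$ (by $f_j(\tau_k)=\delta_{j,k}$), the image contains a spanning set. There is essentially no obstacle in this proof; the only thing worth flagging is that one must invoke the coordinate-expansion identity for \emph{both} spaces, which itself is a one-line consequence of Definition \ref{PONB} applied to $x=\sum_j f_j(x)\tau_j$.
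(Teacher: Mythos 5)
Your proposal is correct and coincides with the paper's own argument: the paper defines the very same map $V:\mathcal{X}\ni x \mapsto \sum_{j=1}^{n}f_j(x)\delta_j \in \ell^p([n])$ and checks it is an invertible isometry via the coordinate-expansion identity. You have merely written out the details that the paper delegates to the proof of Theorem \ref{ONB}.
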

\begin{proof}
Define 	$V:\mathcal{X}\ni x \mapsto \sum_{j=1}^{n}f_j(x)\delta_j \in \ell^p([n])$. By doing a similar calculation as in the direct part in the  proof of Theorem \ref{ONB}, we see that $V$ is an invertible isometry.
\end{proof}
Now we derive main result of this paper.
\begin{theorem}\label{FGJ}(\textbf{Functional Ghobber-Jaming  Uncertainty Principle})
Let $(\{f_j\}_{j=1}^n, \{\tau_j\}_{j=1}^n)$	and $(\{g_k\}_{k=1}^n, \{\omega_k\}_{k=1}^n)$ be p-orthonormal bases  for $\mathcal{X}$.  If $M,N\subseteq \{1, \dots, n\}$ are such that 
\begin{align*}
	o(M)^\frac{1}{q}o(N)^\frac{1}{p}< \frac{1}{\displaystyle \max_{1\leq j,k\leq n}|g_k(\tau_j) |},
\end{align*}
then for all $x \in \mathcal{X}$, 	
\begin{align}\label{FGJI}
	\|x\|\leq \left(1+\frac{1}{1-o(M)^\frac{1}{q}o(N)^\frac{1}{p}\displaystyle\max_{1\leq j,k\leq n}|g_k(\tau_j)|}\right)\left[\left(\sum_{j\in M^c}|f_j(x)|^p\right)^\frac{1}{p}+\left(\sum_{k\in N^c}|g_k(x) |^p\right)^\frac{1}{p}\right].
\end{align}
In particular, if $x$ is supported on $M$ in the expansion using basis $\{\tau_j\}_{j=1}^n$ and $x$ is supported on $N$ in the expansion using basis $\{\omega_k\}_{k=1}^n$, then $x=0$. 
\end{theorem}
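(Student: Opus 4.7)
The plan is to mirror the Hilbert-space proof of Theorem \ref{GJ} in the p-orthonormal setting, using the natural coordinate projections
\begin{align*}
P_M x := \sum_{j \in M} f_j(x)\tau_j, \qquad Q_N x := \sum_{k \in N} g_k(x)\omega_k.
\end{align*}
Condition (ii) of Definition \ref{PONB} gives immediately that $\|P_M x\| = \bigl(\sum_{j\in M}|f_j(x)|^p\bigr)^{1/p}$, $\|(I-P_M)x\| = \bigl(\sum_{j\in M^c}|f_j(x)|^p\bigr)^{1/p}$, and the analogues for $Q_N$; in particular all four of $P_M, I-P_M, Q_N, I-Q_N$ have operator norm at most $1$, and the bracket on the right of \eqref{FGJI} is exactly $\|(I-P_M)x\| + \|(I-Q_N)x\|$.

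The quantitative heart of the proof is the bound $\|Q_N P_M x\| \leq \alpha \|P_M x\|$, where $\alpha := o(M)^{1/q} o(N)^{1/p} \max_{1 \leq j,k \leq n}|g_k(\tau_j)| < 1$ by hypothesis. Starting from
\begin{align*}
Q_N P_M x = \sum_{k \in N}\Bigl(\sum_{j \in M} g_k(\tau_j) f_j(x)\Bigr)\omega_k,
\end{align*}
the p-orthonormal identity on $\{\omega_k\}$ gives $\|Q_N P_M x\|^p = \sum_{k \in N}\bigl|\sum_{j \in M} g_k(\tau_j) f_j(x)\bigr|^p$. Estimating each inner sum by H\"older's inequality with conjugate exponents $(q,p)$, the dual piece contributes $o(M)^{1/q}\max_{j,k}|g_k(\tau_j)|$ and the remaining $p$-piece is exactly $\|P_M x\|$; summing the $o(N)$ resulting terms and taking a $p$-th root then produces the factor $o(N)^{1/p}$.

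To conclude I would write $P_M x = Q_N P_M x + (I-Q_N) P_M x$ and use the identity $(I-Q_N) P_M = (I-Q_N) - (I-Q_N)(I-P_M)$ together with $\|I-Q_N\| \leq 1$ to obtain $\|(I-Q_N) P_M x\| \leq \|(I-Q_N)x\| + \|(I-P_M)x\|$. Combined with the quantitative bound this gives $(1-\alpha)\|P_M x\| \leq \|(I-P_M)x\| + \|(I-Q_N)x\|$, and the plain triangle inequality $\|x\| \leq \|P_M x\| + \|(I-P_M)x\|$ then yields the constant $1 + \tfrac{1}{1-\alpha}$ multiplying the symmetric bracket, exactly as in \eqref{FGJI}. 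The ``in particular'' clause is immediate, since the two support hypotheses make the right-hand side vanish. The step I expect to require the most care is the H\"older estimate itself: the exponent asymmetry $o(M)^{1/q}$ versus $o(N)^{1/p}$ is forced by which basis is estimated in the dual norm and which is summed by cardinality, and this placement must be tracked precisely for the bound to close.
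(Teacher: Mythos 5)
Your proposal is correct and follows essentially the same route as the paper: your $Q_N$ is $V^{-1}P_NV$ for the paper's isometry $V:x\mapsto\sum_{k}g_k(x)\tau_k$, so your key bound $\|Q_NP_Mx\|\leq\alpha\|P_Mx\|$ is the paper's H\"older estimate $\|P_NVP_M\|\leq o(M)^{1/q}o(N)^{1/p}\max_{j,k}|g_k(\tau_j)|$, and your identity $(I-Q_N)P_M=(I-Q_N)-(I-Q_N)(I-P_M)$ is exactly the paper's step $\|P_{N^c}VP_Mx\|\leq\|P_{N^c}Vx\|+\|P_{M^c}x\|$. The assembly via $\|x\|\leq\|P_Mx\|+\|(I-P_M)x\|$ and the resulting constant $1+\tfrac{1}{1-\alpha}$ match the paper's proof.
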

\begin{proof}
Given $S\subseteq \{1, \dots, n\}$, define 
\begin{align*}
	P_Sx\coloneqq \sum_{j\in S}f_j(x)\tau_j, \quad \forall x \in \mathcal{X}, \quad 
	\|x\|_{S, f}\coloneqq \left(\sum_{j \in S}|f_j(x)|^p\right)^\frac{1}{p}, \quad 
	\|x\|_{S, g}\coloneqq \left(\sum_{j \in S}|g_j(x)|^p\right)^\frac{1}{p}.
\end{align*}
Also define $V:\mathcal{X}\ni x \mapsto \sum_{k=1}^{n}g_k(x)\tau_k \in \mathcal{X}$. Then $V$ is an invertible isometry. Using $V$ we make following important calculations:
\begin{align*}
	\|P_Sx\|=\left\|\sum_{j\in S}f_j(x)\tau_j\right\|=\left(\sum_{j \in S}|f_j(x)|^p\right)^\frac{1}{p}=	\|x\|_{S, f}, \quad \forall x \in \mathcal{X}
\end{align*}
and 
\begin{align*}
	\|P_SVx\|&=\left\|\sum_{j\in S}f_j(Vx)\tau_j\right\|=\left\|\sum_{j\in S}f_j\left(\sum_{k=1}^{n}g_k(x)\tau_k\right)\tau_j\right\|=\left\|\sum_{j\in S}\sum_{k=1}^{n}g_k(x)f_j(\tau_k)\tau_j\right\|\\
	&=\left\|\sum_{j\in S}g_j(x)\tau_j\right\|=\left(\sum_{j \in S}|g_j(x)|^p\right)^\frac{1}{p}=	\|x\|_{S, g}, \quad \forall x \in \mathcal{X}.
\end{align*}
Now let $y \in \mathcal{X}$ be such that $\{j \in \{1, \dots, n\}: f_j(y)\neq 0\} \subseteq M.$
Then $	\|P_NVy\|=\|P_NVP_My\|\leq \|P_NVP_M\|\|y\|$ and 
\begin{align*}
		\|y\|_{N^c, g}=\|P_{N^c}Vy\|=\|Vy-P_NVy\|\geq \|Vy\|-\|P_NVy\|=\|y\|-\|P_NVy\| \geq \|y\|-\|P_NVP_M\|\|y\|.
\end{align*}
Therefore 
\begin{align}\label{INP}
	\|y\|_{N^c, g}\geq (1-\|P_NVP_M\|)\|y\|.
\end{align}
Let $x \in \mathcal{X}$.   Note that $P_Mx$ satisfies $	\{j \in \{1, \dots, n\}: f_j(P_Mx)\neq 0\} \subseteq M.$ Now using  (\ref{INP}) we get 
\begin{align*}
	\|x\|&=\|P_Mx+P_{M^c}x\|\leq \|P_Mx\|+\|P_{M^c}x\|
	\leq \frac{1}{1-\|P_NVP_M\|}	\|P_Mx\|_{N^c, g}+\|P_{M^c}x\|\\
	&=\frac{1}{1-\|P_NVP_M\|}\|P_{N^c}VP_Mx\|+\|P_{M^c}x\|=\frac{1}{1-\|P_NVP_M\|}\|P_{N^c}V(x-P_{M^c}x)\|+\|P_{M^c}x\|\\
	&\leq \frac{1}{1-\|P_NVP_M\|}\|P_{N^c}Vx\|+\frac{1}{1-\|P_NVP_M\|}\|P_{N^c}VP_{M^c}x\|+\|P_{M^c}x\|\\
	&\leq  \frac{1}{1-\|P_NVP_M\|}\|P_{N^c}Vx\|+\frac{1}{1-\|P_NVP_M\|}\|P_{M^c}x\|+\|P_{M^c}x\|\\
	&=\frac{1}{1-\|P_NVP_M\|}\|P_{N^c}Vx\|+\left(1+\frac{1}{1-\|P_NVP_M\|}\right)\|P_{M^c}x\|\\
	&\leq \|P_{N^c}Vx\|+\frac{1}{1-\|P_NVP_M\|}\|P_{N^c}Vx\|+\left(1+\frac{1}{1-\|P_NVP_M\|}\right)\|P_{M^c}x\|\\
	&=\left(1+\frac{1}{1-\|P_NVP_M\|}\right)[\|P_{N^c}Vx\|+\|P_{M^c}x\|]=\left(1+\frac{1}{1-\|P_NVP_M\|}\right)[\|x\|_{N^c,g}+\|P_{M^c}x\|]\\
	&=\left(1+\frac{1}{1-\|P_NVP_M\|}\right)\left[\left(\sum_{j\in M^c}|f_j(x)|^p\right)^\frac{1}{p}+\left(\sum_{k\in N^c}|g_k(x) |^p\right)^\frac{1}{p}\right].
\end{align*}
For $x \in \mathcal{X}$, we  now find 

\begin{align*}
		&\|P_NVP_Mx\|^p=\left\|\sum_{k\in N}f_k(VP_Mx)\tau_k\right\|^p=\left(\sum_{k\in N}|f_k(VP_Mx)|^p\right)^\frac{1}{p}=\sum_{k\in N}\left|(f_kV) \left(\sum_{j\in M}f_j(x)\tau_j\right)\right|^p\\
		&=\sum_{k\in N}\left| \sum_{j\in M}f_j(x)f_k(V\tau_j)\right|^p=\sum_{k\in N}\left| \sum_{j\in M}f_j(x)f_k\left(\sum_{r=1}^{n}g_r(\tau_j)\tau_r\right)\right|^p
		=\sum_{k\in N}\left| \sum_{j\in M}f_j(x)\sum_{r=1}^{n}g_r(\tau_j)f_k(\tau_r)\right|^p\\
		&=\sum_{k\in N}\left| \sum_{j\in M}f_j(x)g_k(\tau_j)\right|^p\leq \sum_{k\in N}\left(\sum_{j\in M}|f_j(x)g_k(\tau_j)|\right)^p
		\leq \left(\displaystyle \max_{1\leq j,k\leq n}|g_k(\tau_j) |\right)^p\sum_{k\in N}\left(\sum_{j\in M}|f_j(x)|\right)^p\\
		&=\left(\displaystyle \max_{1\leq j,k\leq n}|g_k(\tau_j) |\right)^po(N)\left(\sum_{j\in M}|f_j(x)|\right)^p\leq \left(\displaystyle \max_{1\leq j,k\leq n}|g_k(\tau_j) |\right)^po(N)\left(\sum_{j\in M}|f_j(x)|^p\right)^\frac{p}{p}\left(\sum_{j\in M}1^q\right)^\frac{p}{q}\\
		&\leq \left(\displaystyle \max_{1\leq j,k\leq n}|g_k(\tau_j) |\right)^po(N)\left(\sum_{j=1}^n|f_j(x)|^p\right)^\frac{p}{p}\left(\sum_{j\in M}1^q\right)^\frac{p}{q}=\left(\displaystyle \max_{1\leq j,k\leq n}|g_k(\tau_j) |\right)^po(N)\|x\|^p o(M)^\frac{p}{q}.
\end{align*}
Therefore 
\begin{align*}
	\|P_NVP_M\|\leq \displaystyle \max_{1\leq j,k\leq n}|g_k(\tau_j) |  o(N)^\frac{1}{p}o(M)^\frac{1}{q}
\end{align*}
which gives the theorem.
\end{proof}
\begin{corollary}
	Theorem \ref{GJ} follows from Theorem \ref{FGJ}.
\end{corollary}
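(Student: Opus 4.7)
The plan is to verify that the classical Hilbert space setting of Theorem \ref{GJ} is a special instance of the framework of Theorem \ref{FGJ} with $p=2$, so that the Hilbert inequality drops out by direct substitution.

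First I would observe that, in the Hilbert space $\mathbb{C}^n$, if $\{\tau_j\}_{j=1}^n$ is an orthonormal basis in the usual inner-product sense, then the associated coordinate functionals are $f_j(x)=\langle x,\tau_j\rangle$, and similarly $g_k(x)=\langle x,\omega_k\rangle$. The normalization $\|\tau_j\|=1$ is immediate, and $\|f_j\|=1$ follows from Cauchy--Schwarz together with $f_j(\tau_j)=1$. Parseval's identity gives
\begin{align*}
\left\|\sum_{j=1}^{n} a_j\tau_j\right\| = \left(\sum_{j=1}^{n}|a_j|^2\right)^{\frac{1}{2}},
\end{align*}
so $(\{f_j\}_{j=1}^n,\{\tau_j\}_{j=1}^n)$ and $(\{g_k\}_{k=1}^n,\{\omega_k\}_{k=1}^n)$ are both $2$-orthonormal bases for $\mathbb{C}^n$ in the sense of Definition \ref{PONB}.

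Next I would translate the hypotheses and conclusion of Theorem \ref{FGJ} with $p=q=2$. The quantity $\max_{j,k}|g_k(\tau_j)|$ becomes $\max_{j,k}|\langle\tau_j,\omega_k\rangle|$, and $o(M)^{1/q}o(N)^{1/p}$ collapses to $\sqrt{o(M)o(N)}$, which exactly matches the smallness hypothesis of Theorem \ref{GJ} (since $\sqrt{o(M)o(N)}<1/\max_{j,k}|\langle\tau_j,\omega_k\rangle|$ is equivalent to $o(M)o(N)<1/\max_{j,k}|\langle\tau_j,\omega_k\rangle|^2$). Substituting $f_j(x)=\langle x,\tau_j\rangle$ and $g_k(x)=\langle x,\omega_k\rangle$ into inequality (\ref{FGJI}) then reproduces the conclusion of Theorem \ref{GJ} verbatim.

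There is no real obstacle here; the corollary is purely a matter of recognizing that orthonormal bases are a special case of $p$-orthonormal bases with $p=2$ and unwinding notation. The only thing to be cautious about is writing the hypothesis in the equivalent squared form used in Theorem \ref{GJ}, which is immediate from the fact that $p=q=2$ makes both exponents equal to $1/2$.
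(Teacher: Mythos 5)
Your proposal is correct and follows essentially the same route as the paper: define $f_j(h)=\langle h,\tau_j\rangle$ and $g_k(h)=\langle h,\omega_k\rangle$, note that orthonormal bases are precisely $2$-orthonormal bases with $p=q=2$, and observe that $|g_k(\tau_j)|=|\langle\tau_j,\omega_k\rangle|$ so the hypothesis and conclusion of Theorem \ref{FGJ} specialize verbatim to those of Theorem \ref{GJ}. Your write-up simply spells out the Parseval and Cauchy--Schwarz verifications that the paper leaves implicit.
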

\begin{proof}
Let $\{\tau_j\}_{j=1}^n$,  $\{\omega_j\}_{j=1}^n$ be two orthonormal bases   for a  finite dimensional Hilbert space $\mathcal{H}$.	Define 
\begin{align*}
	f_j:\mathcal{H} \ni h \mapsto \langle h, \tau_j \rangle \in \mathbb{K}; \quad g_j:\mathcal{H} \ni h \mapsto \langle h, \omega_j \rangle \in \mathbb{K}, \quad \forall 1\leq j\leq n.
\end{align*}
Then $p=q=2$ and $	|f_j(\omega_k)|=|\langle \omega_k, \tau_j \rangle | $  for all $1\leq j, k \leq n.$	
\end{proof}
By interchanging p-orthonormal bases in Theorem  \ref{FGJ} we  get the following theorem. 
\begin{theorem}
(\textbf{Functional Ghobber-Jaming  Uncertainty Principle}) \label{FGJ2}
Let $(\{f_j\}_{j=1}^n, \{\tau_j\}_{j=1}^n)$	and $(\{g_k\}_{k=1}^n, \{\omega_k\}_{k=1}^n)$ be p-orthonormal bases  for $\mathcal{X}$.  If $M,N\subseteq \{1, \dots, n\}$ are such that 
\begin{align*}
	o(M)^\frac{1}{q}o(N)^\frac{1}{p}< \frac{1}{\displaystyle \max_{1\leq j,k\leq n}|f_j(\omega_k) |},
\end{align*}
then for all $x \in \mathcal{X}$, 	
\begin{align*}
	\|x\|\leq \left(1+\frac{1}{1-o(M)^\frac{1}{q}o(N)^\frac{1}{p}\displaystyle\max_{1\leq j,k\leq n}|f_j(\omega_k)|}\right)\left[\left(\sum_{k\in M^c}|g_k(x)|^p\right)^\frac{1}{p}+\left(\sum_{j\in N^c}|f_j(x) |^p\right)^\frac{1}{p}\right].
\end{align*}
In particular, if $x$ is supported on $M$ in the expansion using basis $\{\omega_k\}_{k=1}^n$ and $x$ is supported on $N$ in the expansion using basis $\{\tau_j\}_{j=1}^n$, then $x=0$. 	
\end{theorem}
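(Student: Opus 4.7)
The plan is to derive Theorem \ref{FGJ2} as an immediate consequence of Theorem \ref{FGJ} by swapping the roles of the two given p-orthonormal bases. Since Theorem \ref{FGJ} makes no asymmetric hypothesis distinguishing $(\{f_j\}_{j=1}^n, \{\tau_j\}_{j=1}^n)$ from $(\{g_k\}_{k=1}^n, \{\omega_k\}_{k=1}^n)$ beyond the labelling, one should be able to reindex and read off the desired inequality without any new estimates.

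Concretely, I would first observe that both pairs $(\{f_j\}_{j=1}^n, \{\tau_j\}_{j=1}^n)$ and $(\{g_k\}_{k=1}^n, \{\omega_k\}_{k=1}^n)$ are p-orthonormal bases, so I can apply Theorem \ref{FGJ} to the ordered pair in which $(\{g_k\}_{k=1}^n, \{\omega_k\}_{k=1}^n)$ plays the role previously played by $(\{f_j\}_{j=1}^n, \{\tau_j\}_{j=1}^n)$, and vice versa. Under this swap, the quantity $\max_{1\leq j,k\leq n}|g_k(\tau_j)|$ appearing in the hypothesis and conclusion of Theorem \ref{FGJ} is replaced by $\max_{1\leq j,k\leq n}|f_j(\omega_k)|$, the sum over $M^c$ now involves the coordinate functionals $\{g_k\}$, and the sum over $N^c$ now involves $\{f_j\}$. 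The hypothesis
\begin{align*}
o(M)^\frac{1}{q}o(N)^\frac{1}{p}< \frac{1}{\displaystyle \max_{1\leq j,k\leq n}|f_j(\omega_k)|}
\end{align*}
is precisely the swapped form of the hypothesis of Theorem \ref{FGJ}, and the displayed inequality in Theorem \ref{FGJ2} is the swapped form of \eqref{FGJI}. The in-particular statement follows by the same support argument as in Theorem \ref{FGJ}.

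There is no genuine obstacle here; the only point that requires a brief check is that the proof of Theorem \ref{FGJ} is entirely symmetric in the two p-orthonormal bases, in the sense that the invertible isometry $V$ constructed there (from one basis to the other) can equally well be constructed in the opposite direction, and every subsequent estimate goes through verbatim after interchanging $f_j \leftrightarrow g_k$ and $\tau_j \leftrightarrow \omega_k$. Once this symmetry is noted, Theorem \ref{FGJ2} is a direct corollary and no further computation is needed.
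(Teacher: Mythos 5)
Your proposal is correct and is exactly the paper's own argument: the paper obtains Theorem \ref{FGJ2} by applying Theorem \ref{FGJ} with the two p-orthonormal bases interchanged, which replaces $\max_{1\leq j,k\leq n}|g_k(\tau_j)|$ by $\max_{1\leq j,k\leq n}|f_j(\omega_k)|$ and swaps the roles of the coordinate functionals in the two sums. Nothing further is needed.
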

Observe that the constant 
\begin{align*}
 C_d e^{C_d \min \{m(E)m(F), m(E)^\frac{1}{d}w(F), m(F)^\frac{1}{d}w(E)\}} 	
\end{align*}
in Inequality (\ref{NAZAROVJAMINGIN})  is depending upon subsets $E$, $F$ and not on the entire domain $\mathbb{R}$ of functions $f$, $\widehat{f}$. Thus it is natural to ask whether there is a constant sharper in Inequality (\ref{FGJI}) depending upon subsets $M$, $N$ and not on $\{1, \dots, n\}$. A careful observation in the  proof of Theorem  \ref{FGJ} gives following result. 
\begin{theorem}
Let $(\{f_j\}_{j=1}^n, \{\tau_j\}_{j=1}^n)$	and $(\{g_k\}_{k=1}^n, \{\omega_k\}_{k=1}^n)$ be p-orthonormal bases  for $\mathcal{X}$.  If $M,N\subseteq \{1, \dots, n\}$ are such that 
\begin{align*}
	o(M)^\frac{1}{q}o(N)^\frac{1}{p}< \frac{1}{\displaystyle \max_{j\in M, k\in N}|g_k(\tau_j) |},
\end{align*}
then for all $x \in \mathcal{X}$, 	
\begin{align*}
	\|x\|\leq \left(1+\frac{1}{1-o(M)^\frac{1}{q}o(N)^\frac{1}{p}\displaystyle\max_{j \in M, k\in N}|g_k(\tau_j)|}\right)\left[\left(\sum_{j\in M^c}|f_j(x)|^p\right)^\frac{1}{p}+\left(\sum_{k\in N^c}|g_k(x) |^p\right)^\frac{1}{p}\right].
\end{align*}	
\end{theorem}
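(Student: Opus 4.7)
The plan is to revisit the proof of Theorem \ref{FGJ} and observe that the estimate on the operator norm $\|P_N V P_M\|$ never actually needs the values $g_k(\tau_j)$ for indices outside $M \times N$. Once this is noticed, a single substitution of the max gives the sharper constant; the rest of the argument is identical.

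Concretely, I would reintroduce the same operator $V: \mathcal{X} \to \mathcal{X}$, $x \mapsto \sum_{k=1}^{n} g_k(x)\tau_k$, and the same projection-like maps $P_S x = \sum_{j \in S} f_j(x)\tau_j$ as in the proof of Theorem \ref{FGJ}. The first half of that proof, namely the chain of inequalities that culminates in
\begin{align*}
\|x\| \leq \left(1+\frac{1}{1-\|P_N V P_M\|}\right)\left[\left(\sum_{j\in M^c}|f_j(x)|^p\right)^\frac{1}{p}+\left(\sum_{k\in N^c}|g_k(x)|^p\right)^\frac{1}{p}\right],
\end{align*}
does not use the maxima at all, only the identity $\|P_S x\| = \|x\|_{S,f}$ and $\|P_S V x\| = \|x\|_{S,g}$. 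I would reuse this step verbatim.

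The only step that has to be rewritten is the estimate of $\|P_N V P_M\|$. Starting from
\begin{align*}
\|P_N V P_M x\|^p = \sum_{k \in N} \left| \sum_{j \in M} f_j(x) g_k(\tau_j) \right|^p,
\end{align*}
the sum on the right involves only indices $j \in M$ and $k \in N$, so applying the triangle inequality and then bounding $|g_k(\tau_j)|$ by $\max_{j \in M,\, k \in N}|g_k(\tau_j)|$ is legitimate. After that, the same Hölder estimate $\sum_{j \in M} |f_j(x)| \leq o(M)^{1/q} \left(\sum_{j \in M} |f_j(x)|^p\right)^{1/p} \leq o(M)^{1/q}\|x\|$ yields
\begin{align*}
\|P_N V P_M\| \leq o(M)^{\frac{1}{q}} o(N)^{\frac{1}{p}} \max_{j \in M,\, k \in N}|g_k(\tau_j)|.
\end{align*}
Plugging this sharper bound into the earlier inequality gives the stated conclusion.

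There is no real obstacle here: the argument is a careful bookkeeping observation rather than a new idea. The only thing to double-check is that the max taken over the smaller index set $M \times N$ is still positive (so the denominator in the constant makes sense); this is guaranteed because, under the stated hypothesis $o(M)^{1/q} o(N)^{1/p} \max_{j \in M, k \in N}|g_k(\tau_j)| < 1$, the quantity $1 - o(M)^{1/q} o(N)^{1/p} \max_{j \in M, k \in N}|g_k(\tau_j)|$ is strictly positive, which is exactly what is needed to invert $I_{\mathcal{X}} - P_N V P_M$ implicitly in the proof.
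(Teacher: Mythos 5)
Your proposal is correct and is exactly the argument the paper intends: the paper itself derives this theorem by "a careful observation in the proof of Theorem \ref{FGJ}," namely that the only place the maximum enters is in bounding $|g_k(\tau_j)|$ inside the double sum over $j\in M$, $k\in N$, so the max may be taken over $M\times N$ while every other step is reused verbatim. No substantive difference from the paper's route.
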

Similarly we have the following result from Theorem \ref{FGJ2}.
\begin{theorem}
Let $(\{f_j\}_{j=1}^n, \{\tau_j\}_{j=1}^n)$	and $(\{g_k\}_{k=1}^n, \{\omega_k\}_{k=1}^n)$ be p-orthonormal bases  for $\mathcal{X}$.  If $M,N\subseteq \{1, \dots, n\}$ are such that 
\begin{align*}
	o(M)^\frac{1}{q}o(N)^\frac{1}{p}< \frac{1}{\displaystyle \max_{j\in N, k \in M}|f_j(\omega_k) |},
\end{align*}
then for all $x \in \mathcal{X}$, 	
\begin{align*}
	\|x\|\leq \left(1+\frac{1}{1-o(M)^\frac{1}{q}o(N)^\frac{1}{p}\displaystyle\max_{j\in N, k \in M}|f_j(\omega_k)|}\right)\left[\left(\sum_{k\in M^c}|g_k(x)|^p\right)^\frac{1}{p}+\left(\sum_{j\in N^c}|f_j(x) |^p\right)^\frac{1}{p}\right].
\end{align*}	
\end{theorem}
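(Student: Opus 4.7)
The plan is to mirror the proof of Theorem \ref{FGJ} with the two p-orthonormal bases interchanged (exactly as Theorem \ref{FGJ2} is obtained from Theorem \ref{FGJ}), and then to observe that only the indices lying inside $M$ and $N$ actually enter the key operator-norm estimate, so the global max can be replaced by the restricted max $\max_{j\in N,\,k\in M}|f_j(\omega_k)|$.

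Concretely, in place of $V$ and $P_S$ from the proof of Theorem \ref{FGJ}, I would introduce the projections $Q_S x := \sum_{k \in S} g_k(x)\omega_k$ and the map $W : \mathcal{X} \ni x \mapsto \sum_{j=1}^n f_j(x)\omega_j$. By Theorem \ref{ONB}, $W$ is an invertible isometry with $g_k \circ W = f_k$ for every $k$, and the p-orthonormality of $\{\omega_k\}$ gives $\|Q_S x\| = (\sum_{k\in S}|g_k(x)|^p)^{1/p}$ and $\|Q_S W x\| = (\sum_{k\in S}|f_k(x)|^p)^{1/p}$. Replaying verbatim the same chain of triangle inequalities used in the proof of Theorem \ref{FGJ} (with $(P_M, P_N, V)$ replaced by $(Q_M, Q_N, W)$) then produces
\begin{align*}
\|x\| \le \left(1+\frac{1}{1-\|Q_N W Q_M\|}\right)\left[\left(\sum_{k\in M^c}|g_k(x)|^p\right)^{1/p}+\left(\sum_{j\in N^c}|f_j(x)|^p\right)^{1/p}\right].
\end{align*}

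The only substantive step is the bound on $\|Q_N W Q_M\|$. Using $g_l \circ W = f_l$ and expanding $Q_M x$,
\begin{align*}
Q_N W Q_M x = \sum_{l \in N} g_l(W Q_M x)\,\omega_l = \sum_{l \in N}\Bigl(\sum_{k \in M} g_k(x) f_l(\omega_k)\Bigr)\omega_l,
\end{align*}
so $\|Q_N W Q_M x\|^p = \sum_{l \in N}\bigl|\sum_{k \in M} g_k(x) f_l(\omega_k)\bigr|^p$ by p-orthonormality of $\{\omega_l\}$. The indices $l,k$ run only over $N$ and $M$, respectively, so extracting $\max_{j\in N,\,k\in M}|f_j(\omega_k)|$ rather than the full max is immediate; a single Hölder step on $\sum_{k\in M}|g_k(x)|$ supplies the factor $o(M)^{1/q}$ (bounded by $o(M)^{1/q}\|x\|$ via $\sum_{k\in M}|g_k(x)|^p\le\|x\|^p$), while the outer sum contributes $o(N)^{1/p}$. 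This yields $\|Q_N W Q_M\|\le o(M)^{1/q}o(N)^{1/p}\max_{j\in N,\,k\in M}|f_j(\omega_k)|$, which substituted into the display above gives the claim.

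Since every step is a faithful transcription of a corresponding step in the proofs of Theorems \ref{FGJ} and \ref{FGJ2}, and the improvement is purely bookkeeping on which indices carry nonzero contributions, I anticipate no real obstacle; the sharpening of the max is a direct reading of the expansion of $Q_N W Q_M x$.
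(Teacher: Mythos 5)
Your proposal is correct and is essentially the paper's own argument: the paper obtains this theorem by rerunning the proof of Theorem \ref{FGJ} with the two bases interchanged (as in Theorem \ref{FGJ2}) and observing that the expansion of the composed operator only involves indices $j\in N$, $k\in M$, which is exactly your computation with $Q_S$ and $W$. All the individual steps you cite (the identity $g_l\circ W=f_l$, the norm identities for $Q_S$ and $Q_SW$, the H\"older step giving $o(M)^{1/q}$, and the outer sum giving $o(N)^{1/p}$) check out.
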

Theorem  \ref{FGJ}  brings the following question.
\begin{question}
	Given $p$ and a Banach space $\mathcal{X}$ of dimension $n$, for which subsets $M,N\subseteq \{1, \dots, n\}$ and pairs of p-orthonormal bases $(\{f_j\}_{j=1}^n, \{\tau_j\}_{j=1}^n)$,  $(\{g_k\}_{k=1}^n, \{\omega_k\}_{k=1}^n)$ for $\mathcal{X}$, we have equality in Inequality (\ref{FGJI})?
\end{question}
It is clear that we used $1<p<\infty$ in the proof of  Theorem  \ref{FGJ}. However, Definition \ref{PONB} can easily be extended to include cases  $p=1$ and $p=\infty$. This therefore leads to the following question. 
\begin{question}
	Whether there are Functional Ghobber-Jaming  Uncertainty Principle (versions of Theorem  \ref{FGJ}) for 1-orthonormal bases and $\infty$-orthonormal bases?
\end{question}
We end by mentioning that Donoho-Stark-Elad-Bruckstein-Ricaud-Torrésani Uncertainty Principle	for finite dimensional Banach spaces is derived in \cite{KRISHNA3} (actually, in  \cite{KRISHNA3} the functional uncertainty principle was derived for p-Schauder frames which is general than p-orthonormal bases. Thus it is worth to derive Theorem   \ref{FGJ} or a variation of it  for p-Schauder frames, which we are unable).

 \bibliographystyle{plain}
 \bibliography{reference.bib}

\begin{thebibliography}{1}

\bibitem{DONOHOSTARK}
David~L. Donoho and Philip~B. Stark.
\newblock Uncertainty principles and signal recovery.
\newblock {\em SIAM J. Appl. Math.}, 49(3):906--931, 1989.

\bibitem{GHOBBERJAMING}
Saifallah Ghobber and Philippe Jaming.
\newblock On uncertainty principles in the finite dimensional setting.
\newblock {\em Linear Algebra Appl.}, 435(4):751--768, 2011.

\bibitem{HARVINJORICKE}
Victor Havin and Burglind J\"{o}ricke.
\newblock {\em The uncertainty principle in harmonic analysis}, volume~28 of
  {\em A Series of Modern Surveys in Mathematics}.
\newblock Springer-Verlag, Berlin, 1994.

\bibitem{JAMING}
Philippe Jaming.
\newblock Nazarov's uncertainty principles in higher dimension.
\newblock {\em J. Approx. Theory}, 149(1):30--41, 2007.

\bibitem{KRISHNA3}
K.~Mahesh Krishna.
\newblock Functional {D}onoho-{S}tark-{E}lad-{B}ruckstein-{R}icaud-{T}orrésani
  uncertainty principle.
\newblock {\em arXiv: 2304.03324v1 [math.FA] 5 April}, 2023.

\bibitem{KRISHNA2}
K.~Mahesh Krishna.
\newblock Group-frames for {B}anach spaces.
\newblock {\em arXiv: 2305.01499v1 [math.FA] 5 May}, 2023.

\bibitem{KRISHNAJOHNSON}
K.~Mahesh Krishna and P.~Sam Johnson.
\newblock Towards characterizations of approximate {S}chauder frame and its
  duals for {B}anach spaces.
\newblock {\em J. Pseudo-Differ. Oper. Appl.}, 12(1):Paper No. 9, 13, 2021.

\bibitem{NAZAROV}
F.~L. Nazarov.
\newblock Local estimates for exponential polynomials and their applications to
  inequalities of the uncertainty principle type.
\newblock {\em Algebra i Analiz}, 5(4):3--66, 1993.

\end{thebibliography}

\end{document}